\documentclass[11pt]{amsart}

\usepackage[T1]{fontenc}
\usepackage{lmodern}
\usepackage{amsmath,amssymb,mathtools}
\usepackage{enumitem}
\usepackage{microtype}
\usepackage[colorlinks=true,linkcolor=blue,citecolor=blue,urlcolor=blue]{hyperref}
\hypersetup{
  pdftitle={A Compactness Characterization of Strongly Symmetric Homeomorphisms},
  pdfauthor={Tailiang Liu and Yuliang Shen},
  pdfsubject={A compactness characterization of strong symmetry and asymptotically smooth welding},
  pdfkeywords={BMOA, strongly quasisymmetric homeomorphism, strongly symmetric homeomorphism, asymptotically smooth curve}
}

\newtheorem{theorem}{Theorem}[section]
\newtheorem{proposition}[theorem]{Proposition}
\newtheorem{lemma}[theorem]{Lemma}
\newtheorem{corollary}[theorem]{Corollary}
\theoremstyle{definition}

\newcommand{\D}{\mathbb D}
\newcommand{\Hh}{\mathbb H}
\newcommand{\T}{\mathbb T}
\newcommand{\R}{\mathbb R}

\newcommand{\BMO}{\mathrm{BMO}}
\newcommand{\VMO}{\mathrm{VMO}}
\newcommand{\CMO}{\mathrm{CMO}}
\newcommand{\BMOA}{\mathrm{BMOA}}
\newcommand{\VMOA}{\mathrm{VMOA}}
\newcommand{\SQS}{\mathrm{SQS}}
\newcommand{\SSS}{\mathrm{SS}}
\newcommand{\essinf}{\operatorname*{ess\,inf}}
\newcommand{\esssup}{\operatorname*{ess\,sup}}
\newcommand{\supp}{\operatorname{supp}}
\newcommand{\id}{\mathrm{Id}}
\newcommand{\Kop}{\mathcal K}
\newcommand{\dist}{\operatorname{dist}}

\title[A Compactness Characterization of Strongly Symmetric Homeomorphisms]
{A Compactness Characterization of Strongly Symmetric Homeomorphisms}

\author{Tailiang Liu}
\address{School of Mathematics and Physics, Jiangsu University of Technology, Changzhou 213001, P. R. China}
\email{ltlmath@163.com}

\author{Yuliang Shen}
\address{Department of Mathematics, Soochow University, Suzhou 215006, P. R. China}
\email{ylshen@suda.edu.cn}
\date{August 2026}

\subjclass[2020]{Primary 30C62, 30H35; Secondary 42B20, 47B07}
\keywords{BMOA, VMO, strongly quasisymmetric homeomorphism, strongly symmetric homeomorphism, asymptotically smooth curve}

\begin{document}

\begin{abstract}
A self-homeomorphism $h$ of the unit circle $\T$ is strongly symmetric if it is absolutely continuous and $\log h'\in\VMO(\T)$. Let $P_h^-$ be the anti-analytic component of the pullback operator $ P_h: F\mapsto= F\circ h$ on $\BMOA(\D)$,  where $\D$ is the unit disk. P. Jones proved $P_h$ is bounded  on BMO if and only if $h$ is strongly quasisymmetric.  Fan, Hu, and Shen showed that the strong symmetry of $h$ yields the compactness of $P_h^-$, and raised the question of whether the converse is true. We answer this question affirmatively, establishing that $P_h^-$ is compact if and only if $\log h'\in\VMO(\T)$. This  provides a compactness characterization  of the VMO-Teichm\"uller space.
\end{abstract}

\maketitle

\section{Introduction and statement of the problem}\label{sec:introduction}
If $I\subset\T$ is an arc and $u\in L^1(\T)$, write
\[
 u_I=\frac1{|I|}\int_Iu\,|d\zeta|,
 \qquad
 M_{\T}(u,I)=\frac1{|I|}\int_I|u-u_I|\,|d\zeta|.
\]
The space $\BMO(\T)$ consists of those $u$ for which
\[
 \|u\|_{\BMO(\T)}=\sup_I M_{\T}(u,I)<\infty,
\]
and $u\in\VMO(\T)$ when
\[
 \lim_{r\downarrow0}\sup_{|I|<r}M_{\T}(u,I)=0.
\]  Equivalently, $\VMO(\T)$ is the closure of $C(\T)$ in the $\BMO(\T)$ norm.

Throughout the paper, unless otherwise stated, every BMO space is understood modulo constants and is equipped with the BMO seminorm.

The geometric background of the problem comes from the theory of asymptotically smooth curves.  A rectifiable Jordan curve $\Gamma$ is asymptotically smooth if, for the shorter subarc $\widetilde{z_1z_2}\subset\Gamma$ joining $z_1$ and $z_2$,
\[
 \lim_{t\downarrow0}
 \sup_{\substack{z_1,z_2\in\Gamma\\ |z_1-z_2|\le t}}
 \frac{\operatorname{length}(\widetilde{z_1z_2})}{|z_1-z_2|}=1.
\]
This is the vanishing version of the chord-arc condition.  Sarason's introduction of $\VMO$ provided the natural function-space scale for such a boundary condition \cite{Sarason}.  Pommerenke then proved that, for a conformal map $f$ of the disk onto a Jordan domain, asymptotic smoothness of the boundary is equivalent to the analytic condition
\[
 \log f'\in\VMOA(\D)
\]
(with rectifiability contained in the geometric condition) \cite{Pommerenke}.

The same phenomenon was subsequently expressed through quasiconformal extensions and vanishing Carleson measures.  Dyn'kin obtained estimates of $f''/f'$ in terms of the complex dilatation of an extension, providing a systematic bridge between boundary smoothness and Carleson-type decay \cite{Dynkin}.  In the universal Teichm\"uller setting, Shen and Wei assembled these analytic, geometric, and welding descriptions: strong symmetry of the welding homeomorphism, vanishing Carleson control of a Beltrami coefficient, asymptotic smoothness of the welding curve, and the $\VMOA$ condition for the logarithmic derivative are equivalent formulations of the same vanishing theory; see \cite{ShenWei}.  In particular, a Jordan curve is asymptotically smooth if and only if its conformal welding  is strongly symmetric. A conformally invariant real-line version can be found in \cite{LiuShen}.  

Let $h:\T\to\T$ be an orientation-preserving homeomorphism.  The homeomorphism $h$ is called strongly quasisymmetric, and one writes $h\in\SQS(\T)$, when $h$ is absolutely continuous and $h'$ is an $A_\infty$ weight.  Equivalently, $\log h'\in\BMO(\T)$ and the pull-back operator
\[
 C_hu=u\circ h
\]
is a bounded isomorphism of $\BMO(\T)$; see Jones's theorem \cite{Jones}.  The homeomorphism $h$ is called strongly symmetric, and one writes $h\in\SSS(\T)$, when $h$ is absolutely continuous and
\[
 \log h'\in\VMO(\T).
\]
Such a homeomorphism is automatically strongly quasisymmetric by John-Nirenberg; see also \cite[p.5]{LiuShen}.  
Recall that  $\BMOA(\D)$ consists of the analytic functions that are Poisson extensions of BMO boundary functions.
  For $F\in\BMOA(\D)$, the analytic and anti-analytic parts of $P_{\D}(F\circ h)$ determine two functions $P_h^+F,P_h^-F\in\BMOA(\D)$ by
\begin{equation}
 P_{\D}(F\circ h)
 =P_h^+F+\overline{P_h^-F}.
 \label{eq:def-Ph-circle-intro}
\end{equation}
Thus $P_h^+F$ is the analytic component, whereas $P_h^-F$ is the analytic function obtained by conjugating the anti-analytic component.  The corresponding Hilbert-transform formulas are given in Section~\ref{sec:projections}.  The operator $P_h^-$ is the one studied by Fan, Hu and Shen.  For recent work on the analytic component $P_h^+$ and its relation to chord-arc conformal welding, see the first author's paper \cite{LiuChordArc}.  Fan, Hu and Shen proved the forward implication in the following statement and posed the converse as Conjecture~4.1 in \cite{FHS}.

\begin{theorem}\label{thm:main}
Let $h\in\SQS(\T)$.  Then $h\in\SSS(\T)$ if and only if
\[
 P_h^-:\BMOA(\D)\longrightarrow\BMOA(\D)
\]
is compact.
\end{theorem}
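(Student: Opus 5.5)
The forward implication ($h\in\SSS(\T)\Rightarrow P_h^-$ compact) is the theorem of Fan, Hu and Shen \cite{FHS}, so I will take it as known and concentrate on the converse. I will argue by contraposition: assume $h\in\SQS(\T)\setminus\SSS(\T)$, so that $\log h'\in\BMO(\T)$ but
\[
\delta:=\limsup_{r\downarrow0}\sup_{|I|<r}M_{\T}(\log h',I)>0 .
\]
The goal is a bounded sequence $F_n\in\BMOA(\D)$ with $F_n\to0$ locally uniformly on $\D$ (equivalently, weakly in the relevant sense) but $\|P_h^-F_n\|_{\BMOA}\ge c>0$. By the standard compactness criterion for such operators, this contradicts compactness.

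\textbf{Step 1: a reformulation of $P_h^-$.} By \eqref{eq:def-Ph-circle-intro}, $P_h^-F$ is, up to constants and conjugation, the anti-analytic projection $\tfrac12(\id - iH)$ of $F\circ h$ on $\T$, where $H$ is the Hilbert transform. Hence
\[
\|P_h^-F\|_{\BMOA}\asymp \|(F\circ h)_-\|_{\BMO(\T)} ,
\]
which measures how far $F\circ h$ is from being a boundary value of an analytic function. Dually, using the $H^1$--$\BMO$ pairing, it suffices to exhibit $F_n$ and test functions $g_n\in H^1$ of unit norm with $\bigl|\int_\T (F\circ h)\,\overline{g_n}\,|d\zeta|\bigr|$ (anti-analytic pairing) bounded below.

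\textbf{Step 2: choice of test functions.} I will choose arcs $I_n$ with $|I_n|\to0$ and $M_{\T}(\log h',I_n)\ge\delta/2$, with centers $\zeta_n$ and points $a_n=(1-|I_n|)\zeta_n$. The natural candidates are $F_n=\log(1-\overline{b_n}z)$-type functions or normalized Möbius pullbacks $F\circ\varphi_{b_n}-F(b_n)$ of a fixed $F$, where $b_n$ is the point associated with the image arc $h(I_n)$. These are uniformly bounded in $\BMOA$ and tend to $0$ locally uniformly. Using conformal invariance, I will rescale: put $h_n=\varphi_{h(a_n)}^{-1}\circ h\circ\varphi_{a_n}$. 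Since $h$ is strongly quasisymmetric, the family $\{h_n\}$ is precompact in the uniform topology, with $\log h_n'$ bounded in $\BMO$. A subsequence converges to a limit $h_\infty\in\SQS(\T)$. Because the oscillation of $\log h'$ on $I_n$ stays $\ge\delta/2$ and is conformally transported, one expects $h_\infty$ not to be a Möbius transformation.

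\textbf{Step 3: rigidity, and the main obstacle.} For a Möbius transformation, $P^-$ vanishes, since composition preserves analyticity. Conversely I need a rigidity statement: if $h_\infty\in\SQS(\T)$ and $P_{h_\infty}^-F=0$ for the fixed test function(s) $F(z)=z$ (or $\log(1-z)$), then $h_\infty$ is Möbius. For $F(z)=z$ this is clear. Indeed, $P^-_{h_\infty}(z)=0$ means $h_\infty$ extends analytically to $\D$, taking values of modulus one on $\T$. Hence it is a finite Blaschke product, and being a homeomorphism it has degree one.

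The genuine difficulty is the passage to the limit. I must show that $\|P_{h_n}^-F\|_{\BMO}\to\|P_{h_\infty}^-F\|_{\BMO}$, or at least a lower semicontinuity of the form $\liminf\ge c\|P^-_{h_\infty}F\|$. I must also show that $h_\infty$ is non-Möbius given the oscillation lower bound. The second point is delicate because $\BMO$ oscillation of $\log h_n'$ need not pass to uniform limits. I would handle it via $A_\infty$ weak-$*$ compactness of $h_n'$ (uniform $A_\infty$ constants give weights converging weakly, with reverse Hölder control), together with the fact that the mean oscillation on the fixed arc $\varphi_{a_n}^{-1}(I_n)$ is then controlled in $L^p$. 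This mean oscillation is bounded below, and it would vanish if the limit were Möbius and the convergence were strong. To upgrade to strong convergence I would use reverse Hölder inequalities. Lower semicontinuity of $\BMO$ norms under locally uniform convergence of Poisson extensions handles the first point.
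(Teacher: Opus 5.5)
The forward direction (citing Fan--Hu--Shen) matches the paper, and much of your scaffolding is sound. M\"obius invariance gives $\|P_h^-(F\circ\varphi_{b_n}^{-1})\|_{\BMO}=\|P_{h_n}^-F\|_{\BMO}$. For fixed smooth $F$ this depends continuously on $h_n$ in the uniform topology. And $P^-_{h_\infty}z=0$ does force $h_\infty$ to be M\"obius. The proof breaks at the step you flag: the claim that $M_{\T}(\log h',I_n)\ge\delta/2$ forces the blow-up limit $h_\infty$ to be non-M\"obius is false in general.

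Uniform $A_\infty$ and reverse H\"older bounds only give equi-integrability of $h_n'$, i.e.\ weak compactness, and cannot be ``upgraded'' to strong convergence. The mean oscillation of $\log h_n'$ on a fixed arc is not lower semicontinuous under weak convergence. For instance, if $\log h'$ oscillates on $I_n$ at a scale much finer than $|I_n|$, then $h_n'$ converges weakly to the derivative of a M\"obius map.

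Choosing the scale more cleverly does not help either. If $h$ is symmetric in the Gardiner--Sullivan sense (quasisymmetry ratios tend to $1$ at small scales), then \emph{every} blow-up limit of $h$ is M\"obius. Yet there are strongly quasisymmetric symmetric maps with $\log h'\notin\VMO(\T)$. Let $\log h'$ carry, on arcs $I_n$ shrinking to points, lacunary blocks of amplitude $\epsilon_n\to0$ spread over $\sim\epsilon_n^{-2}$ consecutive scales. Then the mean oscillation on $I_n$ stays of order one (square-function accumulation), while every quasisymmetry ratio is $1+O(\epsilon_n)$. For such $h$ and fixed smooth $F$ one actually has $\|P^-_{h_n}F\|_{\BMO}\to0$. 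So your lower bound $\liminf\|P^-_{h_n}F\|\ge c\|P^-_{h_\infty}F\|$ gives nothing, and the single-scale ``compactness plus rigidity'' mechanism cannot detect non-$\VMO$ behaviour.

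What is missing is a test that reads the level-set structure of $\log h'$ on the given interval directly, with no limiting object. The paper works on the line with $\Kop_k=C_kH_{\R}C_{k^{-1}}-H_{\R}$, which is compact iff $P^-_{k,\R}$ is (Proposition~\ref{prop:Ph-H-relation}). Off the support its kernel is $\frac1\pi\bigl(\frac{k'(y)}{k(x)-k(y)}-\frac1{x-y}\bigr)$.

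If $M(q,I)\ge\delta$, John--Nirenberg shows that $\{q>q_I+\delta/5\}$ and $\{q<q_I-\delta/5\}$ both have measure $\ge\sigma_\delta|I|$. For $x$ in the translate $I-N|I|$, the ratio $R_x(y)=k'(y)(y-x)/(k(y)-k(x))$ is at least $\lambda>1$ times larger on the high set than on the suitably ordered low set. Hence the indicator of one of the two pieces makes $\Kop_kf$ of fixed sign and size $A_\delta$ on half of that translate. Quasisymmetry makes $|\Kop_kf|\le A_\delta/4$ on a far translate. Together these give $\|\Kop_kf\|_{\BMO}\ge c_\delta$ with $|f|\le\chi_I$, however the oscillation is distributed across scales.

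Compactness then enters only through the localization lemmas: test functions with shrinking or escaping supports satisfy $\|\Kop_kf_n\|_{\BMO}\to0$. These are proved by identifying subsequential limits through the functionals $u\mapsto u_A-u_B$. This also replaces your appeal to a ``standard compactness criterion'', which needs justification. A compact operator on a dual space need not send bounded weak-$*$ null sequences to norm-null ones, so you would have to use the weak-$*$ continuity of $P_h^-$.
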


For convenience, we will carry out our proof on the real line, as the case of the circle can be deduced via a Möbius transformation. Accordingly, Section 2 details this reduction, and Section 3 presents the proof of the theorem. While this problem arises from the VMO theory of the universal Teichmüller space, our approach combines the geometric control of quasisymmetry with localized testing and level-set separation from classical Calderón-Zygmund theory.

\section{Conformal invariance of the projection operator}\label{sec:projections}

Recall that for a locally integrable function $q \in L^1_{\mathrm{loc}}(\R)$ and an interval $I\subset\R$, we also set$$q_I=\frac1{\vert{}I\vert{}}\int_Iq(x)\,dx,  \qquad  M(q,I)=\frac1{\vert{}I\vert{}}\int_I\vert{}q-q_I\vert{}\,dx.$$The space $\BMO(\R)$ is then defined as the set of functions $q$ for which $\sup_I M(q,I) < \infty$.
The space $\CMO(\R)$ is the BMO closure of $C_c(\R)$.  By the standard characterization recalled in \cite{Uchiyama}, $q\in\CMO(\R)$ if and only if
\begin{align}
	\lim_{r\downarrow0}\sup_{|I|<r}M(q,I)&=0, \label{eq:CMO-small}\\
	\lim_{R\to\infty}\sup_{|I|>R}M(q,I)&=0, \label{eq:CMO-large}\\
	\lim_{|x|\to\infty}M(q,I_0+x)&=0
	\quad\text{for every fixed interval }I_0. \label{eq:CMO-tail}
\end{align}
The conformal correspondence
\[
f\in\VMO(\T)
\quad\Longleftrightarrow\quad
f\circ\gamma\in\CMO(\R),
\]
where $\gamma(z)=(z-i)/(z+i)$ maps $\Hh$ onto $\D$, was also established in \cite{LiuShen}.

Let $H_{\mathbb{T}}$ be the circular Hilbert transform, defined by the principal value integral$$H_{\mathbb{T}}u(\theta)=\frac{1}{2\pi}\mathrm{p.v.}\int_{0}^{2\pi}u(t)\cot\left(\frac{\theta-t}{2}\right)dt.$$
The operators
\begin{equation}
 \frac12(\id+iH_{\T})
 \quad\text{and}\quad
 \frac12(\id-iH_{\T})
 \label{eq:circle-projections}
\end{equation}
are the bounded projections on $\BMO(\T)$ onto the analytic and anti-analytic boundary parts, respectively.  Their ranges are the boundary values of $\BMOA(\D)$ and of its complex conjugate.

For $F\in\BMOA(\D)$, define $P_h^+F,P_h^-F\in\BMOA(\D)$ by the boundary identities
\begin{equation}
 P_h^+F=\frac12(\id+iH_{\T})(F\circ h),
 \qquad
 \overline{P_h^-F}=\frac12(\id-iH_{\T})(F\circ h).
 \label{eq:Ph-H-circle-boundary}
\end{equation}
After Poisson extension, these identities are exactly
\[
 P_{\D}(F\circ h)=P_h^+F+\overline{P_h^-F},
\]
which is \eqref{eq:def-Ph-circle-intro}.

The same convention applies on the real line.  Let
\[
 H_{\R}f(x)=\frac1\pi\operatorname{p.v.}\int_{\R}f(y)\Big(\frac{1}{x-y}+\frac{y}{1+y^2} \Big)\,dy
\]
be the Hilbert transform on $\R$.  If $k\in\SQS(\R)$ and $F\in\BMOA(\Hh)$, define $P_{k,\R}^+F,P_{k,\R}^-F\in\BMOA(\Hh)$ by
\begin{equation}
 P_{k,\R}^+F=\frac12(\id+iH_{\R})(F\circ k),
 \qquad
 \overline{P_{k,\R}^-F}=\frac12(\id-iH_{\R})(F\circ k).
 \label{eq:Ph-H-line-boundary}
\end{equation}
Equivalently,
\[
 P_{\Hh}(F\circ k)
 =P_{k,\R}^+F+\overline{P_{k,\R}^-F}.
\]
Proposition~\ref{prop:circle-line-transfer} below shows that these real-line operators are precisely the Cayley transforms of the circle operators.

By composing $h$ on the left with a rotation, which changes neither $\log h'$ nor the compactness of $P_h^-$, we may and do assume that h(1)=1.
Set
\begin{equation}
 k=\gamma^{-1}\circ h\circ\gamma.
 \label{eq:def-k}
\end{equation}
Then $k$ is an increasing self-homeomorphism of $\R$ fixing infinity. 
Define \[\SSS_0(\R)=\{k\in\SQS(\R):\log k'\in\CMO(\R).  \}\]
 The conformal invariance of the BMO--Teichm\"uller space gives $k\in\SQS(\R)$ (see \cite[Theorem~1.2]{LiuShen}); moreover,
\begin{equation}
 k\in\SSS_0(\R)
 \quad\Longleftrightarrow\quad
 h\in\SSS(\T);
 \label{eq:SS-circle-line}
\end{equation}
 In particular, $\log k'\in\BMO(\R)$.

Set
\[
 U_{\gamma}F=F\circ\gamma.
\]
Composition with $\gamma$ is a bounded isomorphism from $\BMO(\T)$ onto $\BMO(\R)$ and from $\BMOA(\D)$ onto $\BMOA(\Hh)$.  The identity
\[
 h\circ\gamma=\gamma\circ k
\]
and conformal invariance of Poisson extension give
\begin{align*}
 P_{\Hh}\bigl((F\circ\gamma)\circ k\bigr)
 &=P_{\Hh}\bigl((F\circ h)\circ\gamma\bigr)\\
 &=\bigl(P_{\D}(F\circ h)\bigr)\circ\gamma.
\end{align*}
Comparing the analytic and anti-analytic parts on the two sides yields
\begin{equation}
 P_{k,\R}^{\pm}(F\circ\gamma)
 =(P_h^{\pm}F)\circ\gamma.
 \label{eq:Ph-circle-line}
\end{equation}
By \eqref{eq:Ph-H-circle-boundary},
\begin{equation}
 H_{\R}U_{\gamma}=U_{\gamma}H_{\T}
 \quad\text{on }\BMO(\T).
 \label{eq:Hilbert-intertwine}
\end{equation}
Thus both the $P_h^\pm$ operators and the Hilbert transforms are carried from the circle to the line by the same Cayley transform.

\begin{proposition}[Circle-line compactness transfer]\label{prop:circle-line-transfer}
Let $h\in\SQS(\T)$.  Then $P_h^-$ is compact on $\BMOA(\D)$ if and only if $P_{k,\R}^-$ is compact on $\BMOA(\Hh)$.
\end{proposition}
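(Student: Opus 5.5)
The plan is to exhibit $P_{k,\R}^-$ as a conjugate of $P_h^-$ by a bounded isomorphism, and then use that compactness of an operator is preserved under conjugation by bounded invertible maps. By hypothesis, $U_\gamma F=F\circ\gamma$ is a bounded isomorphism from $\BMOA(\D)$ onto $\BMOA(\Hh)$. Its inverse is $U_\gamma^{-1}G=G\circ\gamma^{-1}$, and it is bounded by the open mapping theorem on the quotient Banach spaces modulo constants, or directly by the same conformal invariance argument applied to $\gamma^{-1}$.

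The key identity is \eqref{eq:Ph-circle-line}, which in operator form reads
\[
 P_{k,\R}^-\,U_\gamma=U_\gamma\,P_h^-\quad\text{on }\BMOA(\D),
 \qquad\text{that is,}\qquad
 P_{k,\R}^-=U_\gamma P_h^-U_\gamma^{-1}.
\]
Before using it, I will check that this identity is legitimate modulo constants. The decomposition $P_{\Hh}(G)=A+\overline{B}$ into analytic and anti-analytic harmonic parts is unique up to an additive constant, which is split between $A$ and $\overline B$. Since every BMO space is taken modulo constants, comparing the parts on both sides of
\[
 P_{\Hh}\bigl((F\circ\gamma)\circ k\bigr)=\bigl(P_{\D}(F\circ h)\bigr)\circ\gamma
\]
gives a well-defined equality of classes. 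Composition with $\gamma$ preserves analyticity and anti-analyticity, because $\gamma$ is holomorphic from $\Hh$ onto $\D$. An equivalent route is to read off the same identity from \eqref{eq:Hilbert-intertwine} together with \eqref{eq:Ph-H-circle-boundary} and \eqref{eq:Ph-H-line-boundary}.

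The two implications then follow. If $P_h^-$ is compact, then $U_\gamma P_h^- U_\gamma^{-1}$ is compact, since a compact operator composed on either side with bounded operators is compact; concretely, bounded sets are sent to bounded sets and relatively compact sets to relatively compact sets. Conversely, if $P_{k,\R}^-$ is compact, then $P_h^-=U_\gamma^{-1}P_{k,\R}^-U_\gamma$ is compact. The well-definedness of $P_{k,\R}^-$ itself as a bounded map on $\BMOA(\Hh)$ uses $k\in\SQS(\R)$, which \cite[Theorem~1.2]{LiuShen} supplies, together with Jones's theorem.

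The only potentially delicate point is the bookkeeping of constants and of the normalizing term $y/(1+y^2)$ in $H_{\R}$, which makes $H_{\R}$ well defined on $\BMO(\R)$ only modulo constants. I expect no genuine obstacle there: all identities are asserted modulo constants, and the normalizing term only adds a constant.
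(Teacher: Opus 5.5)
Your proposal is correct and is essentially the paper's proof. The paper likewise reads off $P_{k,\R}^-=U_\gamma P_h^-U_\gamma^{-1}$ from \eqref{eq:Ph-circle-line} and concludes because $U_\gamma$ is a bounded isomorphism, while your extra bookkeeping (the splitting modulo constants, boundedness of $U_\gamma^{-1}$) only makes explicit what the paper leaves implicit.
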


\begin{proof}
Equation \eqref{eq:Ph-circle-line} gives
\[
 P_{k,\R}^-=U_{\gamma}P_h^-U_{\gamma}^{-1}.
\]
Since $U_{\gamma}$ is a bounded isomorphism, compactness is preserved in both directions. 
\end{proof}

The following computation applies on either $\T$ or $\R$.  Write $H_X$ for the corresponding Hilbert transform, where $X\in\{\T,\R\}$, and write $P_g^-$ for the operator defined above; on the real line this means $P_{g,\R}^-$.  Every element of $\BMO(X)$ can be written, modulo constants, as
\[
 u=F+\overline G,
\]
where $F$ and $G$ are  boundary values of BMOA  in the corresponding disk or half-plane.

\begin{proposition}\label{prop:Ph-H-relation}
Let $g\in\SQS(X)$.  For boundary values of  BMOA functions $F$ and $G$,
\begin{align}
 [H_X,C_g]F&=2i\,\overline{P_g^-F},
 \label{eq:commutator-on-analytic}\\
 [H_X,C_g]\overline G&=-2i\,P_g^-G.
 \label{eq:commutator-on-antianalytic}
\end{align}
Consequently, the following are equivalent:
\begin{enumerate}[label=\textup{(\roman*)}]
\item $P_g^-$ is compact on BMOA;
\item $[H_X,C_g]$ is compact on $\BMO(X)$;
\item
\[
 \Kop_{g,X}=C_gH_XC_{g^{-1}}-H_X
\]
is compact on $\BMO(X)$.
\end{enumerate}
\end{proposition}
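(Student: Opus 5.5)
The plan is to verify the two commutator identities directly from the projection formulas \eqref{eq:Ph-H-circle-boundary} and \eqref{eq:Ph-H-line-boundary}, and then read off the three equivalences from the block structure of $[H_X,C_g]$ relative to the decomposition $\BMO(X)=\BMOA\oplus\overline{\BMOA}$.

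\textbf{The commutator identities.} For the boundary value $F$ of a BMOA function we have $H_XF=-iF$ modulo constants, because $\frac12(\id+iH_X)F=F$. Likewise $H_X\overline G=i\overline G$. Write
\[
C_gF=F\circ g=P_g^+F+\overline{P_g^-F}.
\]
Applying $H_X$ gives $H_X C_gF=-iP_g^+F+i\overline{P_g^-F}$. On the other hand, $C_gH_XF=-iC_gF=-iP_g^+F-i\overline{P_g^-F}$. Subtracting,
\[
[H_X,C_g]F=2i\,\overline{P_g^-F},
\]
which is \eqref{eq:commutator-on-analytic}. For \eqref{eq:commutator-on-antianalytic}, note that $C_g\overline G=\overline{C_gG}=\overline{P_g^+G}+P_g^-G$. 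Hence $H_XC_g\overline G=i\overline{P_g^+G}-iP_g^-G$, while $C_gH_X\overline G=iC_g\overline G=i\overline{P_g^+G}+iP_g^-G$. Subtracting gives $-2iP_g^-G$. All identities hold modulo constants, which is consistent with the BMO convention.

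\textbf{(i)$\Leftrightarrow$(ii).} Let $\Pi_\pm=\frac12(\id\pm iH_X)$. These are bounded projections on $\BMO(X)$. For $u=F+\overline G$ the identities give
\[
[H_X,C_g]u=2i\,\overline{P_g^-\Pi_+u}-2i\,P_g^-\overline{\Pi_-u}.
\]
Complex conjugation is an isometric real-linear bijection, so it preserves compactness. If $P_g^-$ is compact, each term is therefore compact, and so is the sum. Conversely, restricting to $\BMOA$, we have $[H_X,C_g]|_{\BMOA}=2i\,\overline{P_g^-(\cdot)}$. Conjugating back shows that $P_g^-$ is compact whenever the commutator is.

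\textbf{(ii)$\Leftrightarrow$(iii).} Since $g\in\SQS$, Jones's theorem makes $C_g$ a bounded isomorphism of $\BMO(X)$ with inverse $C_{g^{-1}}$. We have
\[
\Kop_{g,X}=C_gH_XC_{g^{-1}}-H_X=-[H_X,C_g]\,C_{g^{-1}},
\]
because $[H_X,C_g]C_{g^{-1}}=H_X-C_gH_XC_{g^{-1}}$. Composition with bounded operators on either side preserves compactness, so compactness of $\Kop_{g,X}$ is equivalent to compactness of $[H_X,C_g]$.

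The main point requiring care is the "modulo constants" bookkeeping. This matters especially on $\R$, where the modified kernel of $H_\R$ and the definition of $P^\pm_{g,\R}$ are both only determined up to additive constants. I would check that $H_XF=-iF$ holds in $\BMO(X)$ modulo constants for BMOA boundary values, and that $C_g$ maps constants to constants; both follow from the definitions. Everything else is formal.
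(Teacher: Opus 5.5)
Your proposal is correct and follows essentially the same route as the paper. You verify the two commutator identities from the projection formulas, decomposing $F\circ g=P_g^+F+\overline{P_g^-F}$ first, whereas the paper rewrites $H_X(F\circ g)+i(F\circ g)$ directly as $2i\cdot\frac12(\id-iH_X)(F\circ g)$. You then get (i)$\Leftrightarrow$(ii) from the boundedness of the analytic and anti-analytic projections, and (ii)$\Leftrightarrow$(iii) from $\Kop_{g,X}=-[H_X,C_g]C_{g^{-1}}$ with $C_{g^{\pm1}}$ bounded, exactly as the paper does. Your additional details (restricting to $\BMOA$ for the converse, noting that conjugation preserves compactness, and the modulo-constants remark) spell out steps the paper leaves implicit.
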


\begin{proof}
Since $H_XF=-iF$, formula \eqref{eq:Ph-H-circle-boundary} or \eqref{eq:Ph-H-line-boundary} gives
\begin{align*}
 [H_X,C_g]F
 &=H_X(F\circ g)+i(F\circ g)\\
 &=2i\,\frac12(\id-iH_X)(F\circ g)
 =2i\,\overline{P_g^-F}.
\end{align*}
Similarly, $H_X\overline G=i\overline G$.  Since composition by $g$ commutes with complex conjugation,
\begin{align*}
 [H_X,C_g]\overline G
 &=H_X(\overline{G\circ g})-i\overline{G\circ g}\\
 &=-2i\,\frac12(\id+iH_X)(\overline{G\circ g})
 =-2i\,P_g^-G.
\end{align*}
Therefore, for $u=F+\overline G$,
\begin{equation}
 [H_X,C_g]u
 =2i\,\overline{P_g^-F}-2i\,P_g^-G.
 \label{eq:commutator-full-decomposition}
\end{equation}
The analytic and anti-analytic projections are bounded on BMO, so \eqref{eq:commutator-full-decomposition} proves the equivalence of \textup{(i)} and \textup{(ii)}.  Finally,
\begin{equation}
 \Kop_{g,X}=-[H_X,C_g]C_{g^{-1}}.
 \label{eq:K-commutator}
\end{equation}
Since $C_{g^{-1}}$ is a bounded isomorphism of BMO, \textup{(ii)} and \textup{(iii)} are equivalent.
\end{proof}

\section{Proof of Theorem~\ref{thm:main}}\label{sec:kernel}

\begin{lemma}[Kernel formula]\label{lem:kernel}
Let $k\in\SQS(\R)$, let $f\in L^\infty(\R)$ have compact support, and let $x\notin\supp f$.  Then
\begin{equation}
 \Kop_kf(x)
 =\frac1\pi\int_{\R}f(y)
 \left\{
 \frac{k'(y)}{k(x)-k(y)}-\frac1{x-y}
 \right\}\,dy.
 \label{eq:kernel}
\end{equation}
The integral is absolutely convergent.
\end{lemma}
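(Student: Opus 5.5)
We need the kernel formula for $\Kop_k f(x) = C_k H_{\R} C_{k^{-1}} f(x) - H_{\R} f(x)$ when $x \notin \supp f$. The plan is to compute each term off the support, where no principal value is needed, and then subtract.

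\textbf{Step 1: unwind the definition.} By definition, $C_{k^{-1}}f = f\circ k^{-1}$, which is bounded and supported in the compact set $k(\supp f)$. Then
\[
 \Kop_kf(x)=\bigl(H_{\R}(f\circ k^{-1})\bigr)(k(x))-H_{\R}f(x).
\]
Since $k$ is a homeomorphism and $x\notin\supp f$, the point $k(x)$ lies outside $k(\supp f)$, which is closed. Hence $\dist(k(x),k(\supp f))>0$. Both Hilbert transforms are therefore evaluated at points at positive distance from the supports of the functions they act on. There the principal value is an ordinary Lebesgue integral, because the kernel $\frac1{x-y}+\frac{y}{1+y^2}$ is bounded on the compact support. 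We should also note that, as elements of $\BMO$ modulo constants, the representative given by the normalized kernel formula is the one intended. Any additive constant is harmless and cancels in this pointwise identity once we use the normalized $H_{\R}$ consistently, because the normalizing terms $\frac{y}{1+y^2}$ are integrable against a compactly supported function.

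\textbf{Step 2: change variables.} Write
\[
 H_{\R}(f\circ k^{-1})(k(x))=\frac1\pi\int f(k^{-1}(s))\Big(\frac1{k(x)-s}+\frac{s}{1+s^2}\Big)ds
\]
and substitute $s=k(y)$, with $ds=k'(y)\,dy$. This substitution is legitimate because $k\in\SQS(\R)$ is absolutely continuous and increasing, so the change of variables formula holds for integrable functions. The result is
\[
 \frac1\pi\int f(y)\frac{k'(y)}{k(x)-k(y)}\,dy+\frac1\pi\int f(y)\frac{k(y)k'(y)}{1+k(y)^2}\,dy.
\]
Subtracting $H_{\R}f(x)$ gives the kernel in \eqref{eq:kernel}, plus the term
\[
 \frac1\pi\int f(y)\Big(\frac{k(y)k'(y)}{1+k(y)^2}-\frac{y}{1+y^2}\Big)dy.
\]
This term is a finite constant independent of $x$, so it vanishes modulo constants in $\BMO$. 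Since the statement is understood modulo constants per the paper's convention, we say so explicitly.

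\textbf{Step 3: absolute convergence.} On $\supp f$ we have $|x-y|\ge \dist(x,\supp f)>0$ and $|k(x)-k(y)|\ge\delta>0$. The function $k'$ is locally integrable, since $k$ is absolutely continuous. Therefore
\[
 \int|f|\Big(\frac{k'}{|k(x)-k(y)|}+\frac1{|x-y|}\Big)dy\le\|f\|_\infty\bigl(\delta^{-1}|k(\supp f)|+d^{-1}|\supp f|\bigr)<\infty.
\]

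The only delicate point is the bookkeeping of the normalizing constant and the identification of the BMO class with the pointwise integral. Everything else is a routine change of variables.
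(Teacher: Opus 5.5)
Your proof is correct and is the same argument as the paper's: off $\supp f$ no principal value is needed, and the substitution $s=k(y)$ in $H_{\R}(f\circ k^{-1})(k(x))$ produces the kernel. Your Step 2 bookkeeping is more careful than the paper's, which silently drops the normalizing terms. The identity in fact holds only up to the $x$-independent constant $\frac1\pi\int f(y)\bigl(\frac{k(y)k'(y)}{1+k(y)^2}-\frac{y}{1+y^2}\bigr)\,dy$, that is, modulo constants, which is all the later arguments need; so the word ``cancels'' in your Step 1 should be dropped.
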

This proof is simple. Since $x \notin \text{supp } f$, no principal value is needed. The formula follows immediately from the definition of $\mathcal{K}_k$ and the change of variables $s = k(y)$.

A standard elementary fact is recorded first.

\begin{lemma}\label{lem:average-functional}
Let $A,B\subset\R$ be fixed bounded intervals.  There is a constant $C(A,B)$ such that
\[
 |u_A-u_B|\le C(A,B)\|u\|_{\BMO(\R)}
\]
for every $u\in\BMO(\R)$.  Consequently, $[u]\mapsto u_A-u_B$ is a continuous linear functional on $\BMO(\R)$.
\end{lemma}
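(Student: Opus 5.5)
The plan is to join $A$ and $B$ through a single interval containing both, and compare averages along a chain in which consecutive intervals are nested. Let $J$ be the smallest bounded interval containing $A\cup B$. By translation invariance nothing is lost in fixing it; the constant will depend only on $|A|$, $|B|$ and $|J|$.

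For any bounded intervals $I\subset J$, we have
\[
 |u_I-u_J|=\Bigl|\frac1{|I|}\int_I(u-u_J)\,dx\Bigr|
 \le\frac1{|I|}\int_J|u-u_J|\,dx
 =\frac{|J|}{|I|}M(u,J)\le\frac{|J|}{|I|}\|u\|_{\BMO(\R)}.
\]
Applying this with $I=A$ and with $I=B$ and using the triangle inequality gives
\[
 |u_A-u_B|\le|u_A-u_J|+|u_J-u_B|\le\Bigl(\frac{|J|}{|A|}+\frac{|J|}{|B|}\Bigr)\|u\|_{\BMO(\R)},
\]
so we may take $C(A,B)=|J|/|A|+|J|/|B|$.

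For the consequence, observe first that $u\mapsto u_A-u_B$ is linear. It vanishes on constants, so it descends to a well-defined linear functional on $\BMO(\R)$ modulo constants. The inequality above is exactly boundedness with respect to the BMO seminorm, and a bounded linear functional is continuous.

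There is no real obstacle here. The only point to watch is well-definedness modulo constants, and this holds because adding a constant $c$ shifts both $u_A$ and $u_B$ by the same $c$. Local integrability of BMO functions guarantees that all the averages exist.
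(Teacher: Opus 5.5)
Your proof is correct and essentially matches the paper's: both enclose $A\cup B$ in a single bounded interval $J$, bound $|u_A-u_J|$ and $|u_B-u_J|$ by $\frac{|J|}{|A|}\|u\|_{\BMO(\R)}$ and $\frac{|J|}{|B|}\|u\|_{\BMO(\R)}$, and then apply the triangle inequality. Your explicit check that the functional is well defined modulo constants supplies the detail behind the paper's ``consequently'' clause.
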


\begin{proof}
Choose a bounded interval $Q$ containing $A\cup B$.  Since the intervals are fixed,
\[
 |u_A-u_Q|
 \le\frac1{|A|}\int_A|u-u_Q|
 \le\frac{|Q|}{|A|}\|u\|_{\BMO},
\]
and the same estimate holds for $B$. Then $$\vert{}u_A - u_B\vert{} \le \vert{}u_A - u_Q\vert{} + \vert{}u_Q - u_B\vert{}\le C(A,B)\|u\|_{\BMO(\R)}.$$
\end{proof}

\subsection{Supports shrinking to one point}

\begin{lemma}[Shrinking-support localization]\label{lem:shrinking}
Assume that $\Kop_k$ is compact on $\BMO(\R)$.  Let $I_n$ be bounded intervals such that
\[
 |I_n|\longrightarrow0,
 \qquad
 c(I_n)\longrightarrow x_0\in\R,
\]
and let $f_n$ satisfy
\[
 \|f_n\|_\infty\le1,
 \qquad
 \supp f_n\subset I_n.
\]
Then
\[
 \|\Kop_kf_n\|_{\BMO}\longrightarrow0.
\]
\end{lemma}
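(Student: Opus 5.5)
We argue by contradiction, combining compactness with a weak-type vanishing statement. Suppose the conclusion fails. Passing to a subsequence, we may assume $\|\Kop_kf_n\|_{\BMO}\ge\varepsilon>0$ for all $n$. The sequence $f_n$ is bounded in $\BMO(\R)$, since $\|f_n\|_{\BMO}\le2\|f_n\|_\infty\le2$. Because $\Kop_k$ is compact, after a further subsequence $\Kop_kf_n\to v$ in $\BMO(\R)$ for some $v$, and then $\|v\|_{\BMO}\ge\varepsilon$. It therefore suffices to show $v=0$ modulo constants. By Lemma~\ref{lem:average-functional}, convergence in $\BMO$ forces $(\Kop_kf_n)_A-(\Kop_kf_n)_B\to v_A-v_B$ for every pair of fixed bounded intervals $A,B$. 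Since a BMO function whose averages over all pairs of intervals (for instance, with rational endpoints) agree is constant, the task reduces to proving
\[
(\Kop_kf_n)_A-(\Kop_kf_n)_B\longrightarrow0
\]
for all fixed bounded $A,B$.

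To establish this, split into the parts near and away from $x_0$. First take $A,B$ with closures not containing $x_0$. For large $n$ we have $\dist(I_n,A\cup B)\ge\delta>0$, so Lemma~\ref{lem:kernel} applies pointwise on $A\cup B$. There the kernel $\frac{k'(y)}{k(x)-k(y)}-\frac1{x-y}$ satisfies
\[
|\Kop_kf_n(x)|\le\frac1\pi\int_{I_n}\frac{k'(y)}{|k(x)-k(y)|}\,dy+\frac1\pi\int_{I_n}\frac{dy}{|x-y|}.
\]
Continuity and monotonicity of $k$ give $|k(x)-k(y)|\ge c(\delta)>0$ uniformly. The first integral is then at most $|k(I_n)|/c(\delta)\to0$, since $k$ is a homeomorphism and $|I_n|\to0$. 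The second is at most $|I_n|/\delta\to0$. Hence $\Kop_kf_n\to0$ uniformly on $A\cup B$, and so the difference of averages tends to $0$.

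For general $A,B$, possibly containing $x_0$, we use the fact that $\Kop_kf_n$ is uniformly bounded in $L^2(A)$, or at least in $L^1(A)$ with uniform integrability, together with pointwise smallness away from $x_0$. Concretely, $\Kop_k=C_kH_RC_{k^{-1}}-H_R$. Since $k'$ is an $A_\infty$ weight, we can pick $p>1$ with $\Kop_k$ bounded on $L^p(\R)$ (e.g.\ via $C_{k^{-1}}$ mapping $L^\infty$-compactly supported functions and Muckenhoupt theory). Then $\|\Kop_kf_n\|_{L^1(J)}\le|J|^{1-1/p}\|\Kop_kf_n\|_{L^p}\lesssim|J|^{1-1/p}\|f_n\|_p$, and $\|f_n\|_p\le|I_n|^{1/p}\to0$, which already gives $(\Kop_kf_n)_A\to0$ for every bounded $A$ with no case split. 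A safer alternative that avoids weighted $L^p$ theory is as follows. Let $J_\eta=(x_0-\eta,x_0+\eta)$. Replace averages over $A$ by averages over $A\setminus J_\eta$, and control the error $\frac1{|A|}\int_{A\cap J_\eta}|\Kop_kf_n-(\Kop_kf_n)_{J}|$ via the John–Nirenberg/BMO bound on a fixed $J\supset A$. This error is at most $C\,\eta\log(1/\eta)$ uniformly in $n$, and the previous paragraph handles the rest.

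The main obstacle is exactly this last step: controlling the contribution near $x_0$ uniformly in $n$, since the kernel formula breaks down there. Both routes need a uniform-in-$n$ bound. The first uses an $L^p$ bound for $\Kop_k$, coming from $C_{k^{\pm1}}$ being bounded on $L^p$ for $k'\in A_\infty$ composed with Hilbert-transform boundedness; this requires checking which $L^p$ is preserved. The second uses only the uniform BMO bound $\sup_n\|\Kop_kf_n\|_{\BMO}<\infty$ together with John–Nirenberg local integrability. I would try the second first, since it needs only results already in the paper.
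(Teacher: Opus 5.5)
Your argument is correct and is essentially the paper's proof. Both use the same ingredients: boundedness of $(f_n)$ in BMO, compactness to extract a BMO limit, continuity of $[u]\mapsto u_A-u_B$ (Lemma~\ref{lem:average-functional}), and uniform vanishing of $\Kop_kf_n$ on compact subsets of $\R\setminus\{x_0\}$ from the kernel formula. The step you call the main obstacle is not needed: the paper only uses intervals whose closures avoid $x_0$. Lebesgue differentiation at each $x\neq x_0$ involves only such intervals, and $\{x_0\}$ is null, so equality of those averages already forces the limit to be a.e.\ constant. Your John--Nirenberg patch is valid but superfluous, and so is the weighted $L^p$ route, which would need $(k^{-1})'\in A_p$ for some finite $p$.
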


\begin{proof}
The sequence $(f_n)$ is bounded in BMO, since $\|f_n\|_{\BMO}\le2$.  Let $L\Subset\R\setminus\{x_0\}$ be a fixed compact interval.  For all sufficiently large $n$, $L\cap I_n=\varnothing$.  By Lemma~\ref{lem:kernel},
\[
 \sup_{x\in L}\left|\int_{I_n}\frac{f_n(y)}{x-y}\,dy\right|
 \le\frac{|I_n|}{\dist(L,I_n)}\longrightarrow0.
\]
Similarly,
\[
 \sup_{x\in L}\left|\int_{I_n}
 \frac{f_n(y)k'(y)}{k(x)-k(y)}\,dy\right|
 \le
 \frac{|k(I_n)|}{\dist(k(L),k(I_n))}\longrightarrow0,
\]
because $k$ is continuous and $|k(I_n)|\to0$.  Hence
\begin{equation}
 \Kop_kf_n\longrightarrow0
 \quad\text{uniformly on compact subsets of }\R\setminus\{x_0\}. \label{eq:local-uniform-zero}
\end{equation}

Suppose the conclusion fails.  By compactness, after passing to a subsequence,
\[
 [\Kop_kf_n]\longrightarrow[u]
 \quad\text{in }\BMO.
\]
If $A$ and $B$ are bounded intervals avoiding $x_0$, then \eqref{eq:local-uniform-zero} and Lemma~\ref{lem:average-functional} give
\[
 u_A-u_B
 =\lim_{n\to\infty}
 \big((\Kop_kf_n)_A-(\Kop_kf_n)_B\big)=0.
\]
By Lebesgue differentiation, $u$ is a.e. constant on $\R\setminus\{x_0\}$, and hence represents the zero element of $\BMO$.  Thus every norm-convergent subsequence of $[\Kop_kf_n]$ converges to zero, contradicting the assumed failure of norm convergence to zero.
\end{proof}

\subsection{Supports escaping to infinity}

For $R>0$ define
\begin{equation}
 \beta_k(R)=
 \sup\left\{
 \|\Kop_kf\|_{\BMO}:
 \|f\|_\infty\le1,
 \ \supp f\Subset\R\setminus[-R,R]
 \right\}. \label{eq:def-beta}
\end{equation}

\begin{lemma}[Tail localization]\label{lem:tail}
If $\Kop_k$ is compact on $\BMO(\R)$, then
\[
 \beta_k(R)\longrightarrow0
 \qquad(R\to\infty).
\]
\end{lemma}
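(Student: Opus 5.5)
We argue by contradiction, mirroring the proof of Lemma~\ref{lem:shrinking} but with $\infty$ playing the role of the point $x_0$. Note first that $\beta_k$ is nonincreasing in $R$, since the admissible classes in \eqref{eq:def-beta} shrink as $R$ grows. If the conclusion fails, there are $\varepsilon>0$, radii $R_n\to\infty$, and functions $f_n$ with
\[
\|f_n\|_\infty\le1,\qquad \supp f_n\Subset\R\setminus[-R_n,R_n],\qquad \|\Kop_kf_n\|_{\BMO}\ge\varepsilon .
\]
The sequence $(f_n)$ is bounded in $\BMO$, since $\|f_n\|_{\BMO}\le2$. By compactness of $\Kop_k$, after passing to a subsequence, $[\Kop_kf_n]\to[u]$ in $\BMO$. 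It then suffices to show that $u$ is a.e.\ constant, which contradicts $\|u\|_{\BMO}=\lim_n\|\Kop_kf_n\|_{\BMO}\ge\varepsilon$.

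To identify $u$, fix bounded intervals $A,B$ and a compact interval $L\supset A\cup B$. By Lemma~\ref{lem:average-functional}, $u_A-u_B=\lim_n\big((\Kop_kf_n)_A-(\Kop_kf_n)_B\big)$. For large $n$ we have $L\cap\supp f_n=\varnothing$, so Lemma~\ref{lem:kernel} applies on $L$. The point is that we do \emph{not} need $\Kop_kf_n\to0$ on $L$; it is enough that its oscillation on $L$ tends to zero. For $x,x'\in L$ we split
\[
\frac1{x-y}-\frac1{x'-y}=\frac{x'-x}{(x-y)(x'-y)},
\]
and integrating against $|f_n|\le1$ over $|y|>R_n$ gives a bound of order $|L|\int_{|y|>R_n}\frac{dy}{(|y|-C)^2}\to0$. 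For the $k$-part we change variables $s=k(y)$:
\[
\int\frac{|f_n(y)|\,k'(y)\,|k(x)-k(x')|}{|k(x)-k(y)|\,|k(x')-k(y)|}\,dy\le |k(L)|\int_{s\notin k([-R_n,R_n])}\frac{ds}{\dist(s,k(L))^2}.
\]
Because $k$ is an increasing homeomorphism fixing $\infty$, $k([-R_n,R_n])$ exhausts $\R$, so this also tends to zero. Hence $\sup_{x,x'\in L}|\Kop_kf_n(x)-\Kop_kf_n(x')|\to0$. This gives $|(\Kop_kf_n)_A-(\Kop_kf_n)_B|\to0$, and therefore $u_A=u_B$ for all bounded intervals $A,B$. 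Lebesgue differentiation then shows $u$ is a.e.\ constant, so $[u]=0$, which is the desired contradiction.

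The main obstacle, as compared with Lemma~\ref{lem:shrinking}, is that the individual kernel terms need not tend to zero pointwise: $\int f_n(y)/(x-y)\,dy$ over a far-out support can stay of size $\log$ of the support ratio. One therefore has to work with differences of values, that is, with oscillation, and exploit the second-order decay of the differenced kernel; the change of variables for the $k$-term must be handled carefully in the same way. It is also worth checking that $\Kop_kf_n$ is genuinely represented by the kernel formula on $L$ modulo constants, including the normalizing term $y/(1+y^2)$ in $H_\R$. That term only contributes a constant, which is harmless in $\BMO$.
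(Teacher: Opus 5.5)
Your proposal is correct and follows essentially the same route as the paper. Both arguments run as follows: argue by contradiction, extract a $\BMO$-convergent subsequence by compactness, show that the oscillation of $\Kop_kf_n$ on each compact interval tends to zero via the second-order decay of the differenced kernels, and then use Lemma~\ref{lem:average-functional} to force $[u]=0$. The only difference is that the paper splits the $k$-term into left and right tails and evaluates $|k(x)-k(x_0)|/(k(R_n)-M_L)$ explicitly, which is the same estimate as your change of variables $s=k(y)$.
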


\begin{proof}
Suppose otherwise.  Then there are $R_n\to\infty$, functions $f_n$ with
\[
 \|f_n\|_\infty\le1,
 \qquad
 \supp f_n\Subset\R\setminus[-R_n,R_n],
\]
and a number $\varepsilon>0$ such that
\[
 \|\Kop_kf_n\|_{\BMO}\ge\varepsilon.
\]
Fix a compact interval $L\subset[-A,A]$ and points $x,x_0\in L$.  For the ordinary Hilbert kernel,
\begin{align*}
 &\left|\int f_n(y)
 \left(\frac1{x-y}-\frac1{x_0-y}\right)dy\right|\\
 &\qquad\le |x-x_0|
 \int_{|y|>R_n}\frac{dy}{|x-y|\,|x_0-y|}
 \le \frac{C_L}{R_n}.
\end{align*}
For the nonlinear part, let
\[
 M_L=\max_{x\in L}k(x),
 \qquad
 m_L=\min_{x\in L}k(x).
\]
On the right tail,
\begin{align*}
 &\int_{R_n}^{\infty}k'(y)
 \left|\frac1{k(x)-k(y)}-\frac1{k(x_0)-k(y)}\right|dy\\
 &\qquad\le |k(x)-k(x_0)|
 \int_{R_n}^{\infty}\frac{k'(y)}{(k(y)-M_L)^2}\,dy\\
 &\qquad=\frac{|k(x)-k(x_0)|}{k(R_n)-M_L}.
\end{align*}
The corresponding left-tail estimate is
\[
 \frac{|k(x)-k(x_0)|}{m_L-k(-R_n)}.
\]
Since $k$ is an increasing homeomorphism of $\R$ onto itself, both expressions tend to zero.  Consequently,
\begin{equation}
 \sup_{x,x_0\in L}
 |\Kop_kf_n(x)-\Kop_kf_n(x_0)|\longrightarrow0. \label{eq:tail-local-constant}
\end{equation}

Compactness gives a subsequence converging in $\BMO$ to some $[u]$.  If $A_1,A_2$ are any two bounded intervals, choose one compact interval $L$ containing them.  Equation \eqref{eq:tail-local-constant} and Lemma~\ref{lem:average-functional} imply $u_{A_1}=u_{A_2}$.  Hence $u$ is constant a.e. and $[u]=0$, contradicting the uniform lower bound $\varepsilon$.
\end{proof}

Let $k\in\SQS(\R)$ and set
\[
 q=\log k'\in\BMO(\R).
\]
Fix a quasisymmetry control function $\eta$ for $k$, so that
\begin{equation}
 \frac{|k(x)-k(a)|}{|k(x)-k(b)|}
 \le
 \eta\left(\frac{|x-a|}{|x-b|}\right) \label{eq:qs-control}
\end{equation}
whenever the points are distinct.  In particular, $\eta(t)\to0$ as $t\downarrow0$.

For a bounded interval $I$ define
\[
 \alpha_k(I)=
 \sup\left\{
 \|\Kop_kf\|_{\BMO}:
 \|f\|_\infty\le1,
 \ \supp f\subset I
 \right\}.
\]

The next proposition is the central estimate. 
\begin{proposition}\label{prop:detector}
For every $\delta>0$ there are constants
\[
 c_\delta>0,
 \qquad
 \kappa_\delta>0,
\]
depending only on $\delta$, $\|q\|_{\BMO}$, and the quasisymmetry function $\eta$, with the following property.  If $I$ is a bounded interval,
\[
 M(q,I)\ge\delta,
\]
and $G\subset I$ is measurable with
\[
 |G|\le\kappa_\delta|I|,
\]
then there exists a real-valued function $f$ such that
\[
 |f|\le\chi_{I\setminus G}
\]
and
\[
 \|\Kop_kf\|_{\BMO}\ge c_\delta.
\]
In particular,
\[
 M(q,I)\ge\delta
 \quad\Longrightarrow\quad
 \alpha_k(I)\ge c_\delta.
\]
\end{proposition}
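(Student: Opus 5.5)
The plan is to argue by compactness after an affine normalization, and then to upgrade the limiting information with explicit test functions built from indicators of subintervals. This upgrade is where the level-set separation enters.

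\textbf{Step 1: reduction to $I=[0,1]$.} If $A,B$ are increasing affine maps of $\R$, then
\[
\Kop_{A\circ k\circ B}=C_B\,\Kop_k\,C_{B^{-1}} .
\]
This holds because $H_{\R}$ commutes with composition by increasing affine maps modulo constants. The BMO seminorm, $M(q,I)$, $\|q\|_{\BMO}$, the ratio $|G|/|I|$ and the quasisymmetry function $\eta$ are all unchanged under such maps. So we may assume $I=[0,1]$ and $k(0)=0$, $k(1)=1$.

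\textbf{Step 2: explicit test functions.} The main computational tool is the closed formula for indicators, obtained from Lemma~\ref{lem:kernel} or directly from the log kernel. With
\[
D_k(x,a)=\log\frac{|k(x)-k(a)|}{|x-a|},
\]
it reads
\[
\Kop_k\chi_{[a,b]}(x)=\frac1\pi\bigl(D_k(x,a)-D_k(x,b)\bigr),
\]
and it is valid for every $x\notin\{a,b\}$, including $x$ inside $[a,b]$. Thus for $f=\sum_j\varepsilon_j\chi_{J_j}$ with $\varepsilon_j\in\{-1,0,1\}$ and $J_j\subset[0,1]\setminus G$, the function $\Kop_kf$ is an explicit combination of logarithms of difference quotients of $k$.

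\textbf{Step 3: argue by contradiction.} Suppose there are $k_n\in\SQS(\R)$, all with the same $\eta$ and $\|q_n\|_{\BMO}\le C$, and sets $G_n\subset[0,1]$, such that
\[
M(q_n,[0,1])\ge\delta,\qquad |G_n|\to0,\qquad \sup\Bigl\{\|\Kop_{k_n}f\|_{\BMO}: |f|\le\chi_{[0,1]\setminus G_n}\Bigr\}\to0 .
\]
Normalized $\eta$-quasisymmetric maps form a normal family, so after passing to a subsequence $k_n\to k_\infty$ locally uniformly. Fix a test function $f$ and set $f_n=f\chi_{[0,1]\setminus G_n}$, so $f_n\to f$ in $L^1$. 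Passing to the limit in the kernel formula for $x$ outside $[0,1]$, as in the proofs of Lemmas~\ref{lem:shrinking} and~\ref{lem:tail} and using Lemma~\ref{lem:average-functional}, shows that
\[
\int f(y)\left\{\frac{k_\infty'(y)}{k_\infty(x)-k_\infty(y)}-\frac{1}{x-y}\right\}dy
\]
is constant in $x$ on each component of $\R\setminus[0,1]$. Letting $x\to\pm\infty$ pins the constant down, so the expression vanishes. Since $f$ is arbitrary, the kernel itself must vanish, which forces $k_\infty$ to be Möbius. As $k_\infty$ fixes $\infty$ and is normalized, $k_\infty=\id$ on $[0,1]$.

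\textbf{Step 4 (main obstacle): contradicting $M(q_n,[0,1])\ge\delta$.} Local uniform convergence $k_n\to\id$ does not control $\log k_n'$ in $L^1$, because $k_n'$ may oscillate wildly. Here the full strength of the hypothesis must be used: the sets $G_n$ are small and $f$ may take oscillating signs.

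\begin{enumerate}[label=\textup{(\alph*)}]
\item \emph{Level sets.} Using John--Nirenberg (uniform $A_\infty$), find sets $E_n^\pm\subset[0,1]$ with $|E_n^\pm|\ge c(\delta,\|q\|_{\BMO})$ such that $q_n\ge (q_n)_{[0,1]}+\delta/4$ on $E_n^+$ and $q_n\le (q_n)_{[0,1]}-\delta/4$ on $E_n^-$. Since $|G_n|\to0$, a positive proportion of each set lies outside $G_n$.
\item \emph{Scale.} A Calderón--Zygmund stopping argument then gives a scale $r=r(\delta,\eta,\|q\|_{\BMO})$, uniform in $n$, and points $x_n^\pm$ with the following property. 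On a positive proportion of the intervals $[x_n^\pm, x_n^\pm+r]$, the log-averages $\log\bigl(|k_n([x,x+r])|/r\bigr)$ stay separated by about $\delta/8$.
\item \emph{Detection.} Apply the formula from Step~2 to $f=\chi_{[a,b]\setminus G_n}$ with endpoints near $x_n^\pm$, correcting for the removed set $G_n$ by an $O(|G_n|)$ error. Computing $M(\Kop_{k_n}f,J)$ on a short interval $J$ adjacent to an endpoint then recovers exactly this separation. This bounds $\|\Kop_{k_n}f\|_{\BMO}$ from below by a constant depending only on $\delta$, $\eta$ and $\|q\|_{\BMO}$, which is the contradiction.
\end{enumerate}

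Making the scale $r$ in (b) uniform in $n$ is the delicate point. I would first try to get it from the uniform reverse Hölder inequality satisfied by the weights $k_n'$. If that fails, I would abandon the contradiction argument and run the level-set construction of Step~4 directly for a single $k$, which also yields explicit constants $c_\delta$ and $\kappa_\delta$.
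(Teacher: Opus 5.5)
There is a genuine gap at Step~4(b), and it is not a matter of making constants uniform: the fixed scale you ask for cannot exist. Your Steps~1--3 are essentially sound. The affine covariance $\Kop_{A\circ k\circ B}=C_B\Kop_kC_{B^{-1}}$ is correct, the indicator formula is correct, and the limit argument does force $k_\infty=\id$.

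That last conclusion is exactly what kills (b). Under your contradiction hypothesis Step~3 gives $k_n\to\id$ uniformly on $[0,2]$. Hence for every fixed $r>0$
\[
\sup_{0\le x\le 1}\Bigl|\frac{k_n(x+r)-k_n(x)}{r}-1\Bigr|\le\frac{2}{r}\sup_{[0,2]}|k_n-\id|\longrightarrow0 .
\]
So the log-averages $\log(|k_n([x,x+r])|/r)$ all tend to $0$ and cannot stay $\delta/8$ apart. Nor can (b) follow from (a) plus uniform $A_\infty$ or reverse H\"older bounds. Take $k_n'(y)=\exp(\delta\,\mathrm{sgn}\sin 2\pi ny)/\cosh\delta$. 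These maps are uniformly bi-Lipschitz, with $\|q_n\|_{\BMO}\le2\delta$ and $M(q_n,[0,1])=\delta$, yet all the oscillation sits at scale $1/n$. The normal-family limit discards precisely the information the statement is about, so the contradiction route is a dead end.

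Your fallback for a single $k$ has the same defect. A scale obtained from Lebesgue differentiation depends on $k$, so you would still need a scale-free detection estimate, and that is what is missing.

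Step~4(c) is also unsupported. You do not explain how the oscillation of $\Kop_{k_n}f$ on a short interval at one endpoint detects a separation between two different points $x_n^{\pm}$. Moreover, deleting $G_n$ is not an $O(|G_n|)$ perturbation near $I$: for any interval $G$ one has $\|\chi_G\|_{\BMO}\ge 1/2$.

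The missing idea is a detector that reads the \emph{pointwise} values of $k'$ on the level sets, whatever the scale of oscillation. The paper gets this by observing from far away rather than near $I$.

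\begin{enumerate}[label=\textup{(\arabic*)}]
\item Delete $G$ from $E=\{q>q_I+\delta/5\}$ and $F=\{q<q_I-\delta/5\}$. Each has measure at least $\sigma_\delta|I|$ by John--Nirenberg and Cauchy--Schwarz, so $\kappa_\delta<\sigma_\delta/4$ leaves most of each.
\item Split each remainder at its median into $E_1<E_2$ and $F_1<F_2$. Then either $E_1<F_2$ or $F_1<E_2$.
\item In the first case, for $x\in J=I-N|I|$ the kernel of $\Kop_k$ equals $(1-R_x(y))/(y-x)$ with $R_x(y)=k'(y)(y-x)/(k(y)-k(x))$. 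For $y_E\in E_1$ and $y_F\in F_2$,
\[
\frac{R_x(y_E)}{R_x(y_F)}=\frac{k'(y_E)}{k'(y_F)}\cdot\frac{y_E-x}{y_F-x}\cdot\frac{k(y_F)-k(x)}{k(y_E)-k(x)}\ge e^{2\delta/5}\,\frac{N-1}{N+1}=\lambda>1 .
\]
The last factor is at least $1$ precisely because of the ordering.
\item Hence at each $x\in J$, with $\tau=(\lambda-1)/(\lambda+1)$, either $R_x\ge1+\tau$ on $E_1$ or $R_x\le1-\tau$ on $F_2$. So one of $\chi_{E_1},\chi_{F_2}$ produces an output of fixed sign and size at least $A_\delta$ on half of $J$.
\item On $J'=I-M|I|$, quasisymmetry bounds the output by $\frac1\pi\eta\bigl(\frac1{M-1}\bigr)+\frac1{\pi(M-1)}\le A_\delta/4$.
\item Comparing with an arbitrary constant on the hull of $J\cup J'$ gives the lower bound $A_\delta/(8(M+1))$.
\end{enumerate}

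Because $J$ and $J'$ lie at distance at least $(N-1)|I|$ from $I$, removing $G$ costs only measure. No scale selection and no limiting argument is needed.
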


\begin{proof}
Write $I=[a,b]$ and $L=|I|$.  Put
\[
 u=q-q_I.
\]
Since $u_I=0$,
\[
 \int_Iu_+=\int_Iu_-
 =\frac12\int_I|u|
 \ge\frac{\delta L}{2}. \label{eq:positive-negative-mass}
\]
Define
\[
 E=\left\{y\in I:q(y)>q_I+\frac\delta5\right\},
 \qquad
 F=\left\{y\in I:q(y)<q_I-\frac\delta5\right\}.
\]
Then,
\[
 \int_Eu_+\ge\frac{3\delta L}{10},
 \qquad
 \int_Fu_-\ge\frac{3\delta L}{10}.
\]
The John--Nirenberg inequality implies
\[
 \frac1L\int_I|u|^2\,dy
 \le C\|q\|_{\BMO}^2.
\]
Cauchy--Schwarz therefore gives a number $\sigma_\delta>0$, depending only on $\delta$ and $\|q\|_{\BMO}$, such that
\begin{equation}
 |E|\ge\sigma_\delta L,
 \qquad
 |F|\ge\sigma_\delta L. \label{eq:large-high-low}
\end{equation}
Choose
\[
 0<\kappa_\delta<\frac{\sigma_\delta}{4}.
\]
Then
\[
 E'=E\setminus G,
 \qquad
 F'=F\setminus G
\]
satisfy
\begin{equation}
 |E'|,|F'|\ge\frac{3\sigma_\delta}{4}L. \label{eq:remaining-size}
\end{equation}

Split $E'$ into a left half $E_1$ and a right half $E_2$ of equal measure, ordered so that every point of $E_1$ lies to the left of every point of $E_2$, up to null sets.  Split $F'$ similarly as $F_1\cup F_2$.  Either
\begin{equation}
 E_1<F_2 \label{eq:order-one}
\end{equation}
or
\begin{equation}
 F_1<E_2. \label{eq:order-two}
\end{equation}
Assume \eqref{eq:order-one}; the other case is the right-hand analogue.

Choose an integer $N>2$, to be fixed momentarily, and let
\[
 J=I-NL=[a-NL,b-NL].
\]
For $x\in J$ and $y\in I$ define
\begin{equation}
 R_x(y)=\frac{k'(y)(y-x)}{k(y)-k(x)}>0. \label{eq:def-Rxy}
\end{equation}
If $y_E\in E_1$ and $y_F\in F_2$, then $y_E<y_F$ and
\[
 \frac{k'(y_E)}{k'(y_F)}
 \ge e^{2\delta/5}.
\]
Furthermore,
\[
 \frac{y_E-x}{y_F-x}\ge\frac{N-1}{N+1},
 \qquad
 \frac{k(y_F)-k(x)}{k(y_E)-k(x)}\ge1.
\]
Therefore
\begin{equation}
 \frac{R_x(y_E)}{R_x(y_F)}
 \ge e^{2\delta/5}\frac{N-1}{N+1}. \label{eq:R-ratio}
\end{equation}
Choose $N=N(\delta)$ so large that the right-hand side of \eqref{eq:R-ratio} is a number $\lambda>1$.  Set
\[
 \tau=\frac{\lambda-1}{\lambda+1}>0.
\]
Then, for every $x\in J$, at least one of the following alternatives holds:
\begin{align}
 R_x(y)&\ge1+\tau
 &&\text{for a.e. }y\in E_1, \label{eq:E-alternative}\\
 R_x(y)&\le1-\tau
 &&\text{for a.e. }y\in F_2. \label{eq:F-alternative}
\end{align}
Indeed, if \eqref{eq:E-alternative} fails, then
\[
 \essinf_{E_1}R_x<1+\tau,
\]
and \eqref{eq:R-ratio} gives
\[
 \esssup_{F_2}R_x
 <\frac{1+\tau}{\lambda}=1-\tau.
\]

Let
\[
 f_E=\chi_{E_1},
 \qquad
 f_F=\chi_{F_2}.
\]
Since $x<y$ on $J\times I$, Lemma~\ref{lem:kernel} and \eqref{eq:def-Rxy} give
\begin{equation}
 \Kop_kf(x)=\frac1\pi\int_I
 \frac{1-R_x(y)}{y-x}f(y)\,dy. \label{eq:left-kernel-R}
\end{equation}
If \eqref{eq:E-alternative} holds, then
\[
 \Kop_kf_E(x)
 \le-\frac\tau\pi\int_{E_1}\frac{dy}{y-x}
 \le-\frac{\tau|E_1|}{\pi(N+1)L}.
\]
If \eqref{eq:F-alternative} holds, then
\[
 \Kop_kf_F(x)
 \ge\frac{\tau|F_2|}{\pi(N+1)L}.
\]
By \eqref{eq:remaining-size}, both $|E_1|$ and $|F_2|$ are bounded below by a fixed positive multiple of $L$.  Hence there is $A_\delta>0$ such that, for every $x\in J$, either
\[
 \Kop_kf_E(x)\le-A_\delta
\]
or
\[
 \Kop_kf_F(x)\ge A_\delta.
\]
It follows that one of the two functions $f_E,f_F$, call it $f$, has the following property: there is a measurable set $J_0\subset J$ with
\begin{equation}
 |J_0|\ge\frac L2 \label{eq:J0-size}
\end{equation}
and either
\begin{equation}
 \Kop_kf\ge A_\delta\quad\text{on }J_0, \label{eq:positive-output}
\end{equation}
or
\begin{equation}
 \Kop_kf\le-A_\delta\quad\text{on }J_0. \label{eq:negative-output}
\end{equation}

This is next compared with the value on a farther interval.  Let $M>N+2$ and set
\[
 J'=I-ML.
\]
For $x\in J'$, $|f|\le\chi_I$ and Lemma~\ref{lem:kernel} imply
\begin{align}
 |\Kop_kf(x)|
 &\le\frac1\pi\int_I\frac{k'(y)}{k(y)-k(x)}\,dy
 +\frac1\pi\int_I\frac{dy}{y-x} \notag\\
 &\le\frac1\pi\frac{k(b)-k(a)}{k(a)-k(x)}
 +\frac1{\pi(M-1)}. \label{eq:far-output-1}
\end{align}
Since $x\le a-(M-1)L$, quasisymmetry gives
\begin{equation}
 \frac{k(b)-k(a)}{k(a)-k(x)}
 \le
 \frac{k(b)-k(a)}{k(a)-k(a-(M-1)L)}
 \le
 \eta\left(\frac1{M-1}\right). \label{eq:far-output-2}
\end{equation}
Choose $M=M(\delta,k)$ so large that the right-hand side of \eqref{eq:far-output-1}, after using \eqref{eq:far-output-2}, is at most $A_\delta/4$.  Thus
\begin{equation}
 |\Kop_kf|\le\frac{A_\delta}{4}
 \quad\text{on }J'. \label{eq:small-on-Jprime}
\end{equation}

Let $Q$ be the smallest interval containing $J\cup J'$.  Then $|Q|\le(M+1)L$.  Suppose \eqref{eq:positive-output} holds.  For every constant $c$, if $c\le A_\delta/2$, then
\[
 |\Kop_kf-c|\ge A_\delta/2
 \quad\text{on }J_0;
\]
if $c>A_\delta/2$, then \eqref{eq:small-on-Jprime} gives
\[
 |\Kop_kf-c|\ge A_\delta/4
 \quad\text{on }J'.
\]
Using \eqref{eq:J0-size},
\[
 \frac1{|Q|}\int_Q|\Kop_kf-c|\,dx
 \ge\frac{A_\delta}{8(M+1)}.
\]
Taking $c=(\Kop_kf)_Q$ gives a fixed lower bound for the BMO norm.  The case \eqref{eq:negative-output} is identical.

It remains to write out the case \eqref{eq:order-two}.  Thus
$F_1<E_2$.  Put
\[
 J=I+NL=[a+NL,b+NL]
\]
and, for $x\in J$ and $y\in I$, define
\begin{equation}
 \widetilde R_x(y)
 =\frac{k'(y)(x-y)}{k(x)-k(y)}>0.
 \label{eq:def-Rxy-right}
\end{equation}
If $y_F\in F_1$ and $y_E\in E_2$, then $y_F<y_E$ and
\[
 \frac{k'(y_E)}{k'(y_F)}\ge e^{2\delta/5}.
\]
Moreover,
\[
 \frac{x-y_E}{x-y_F}\ge\frac{N-1}{N+1},
 \qquad
 \frac{k(x)-k(y_F)}{k(x)-k(y_E)}\ge1.
\]
Consequently
\[
 \frac{\widetilde R_x(y_E)}{\widetilde R_x(y_F)}
 \ge e^{2\delta/5}\frac{N-1}{N+1}=\lambda.
\]
With the same $\tau=(\lambda-1)/(\lambda+1)$, for each $x\in J$ either
\[
 \widetilde R_x(y)\ge1+\tau
 \quad\text{for a.e. }y\in E_2,
\]
or
\[
 \widetilde R_x(y)\le1-\tau
 \quad\text{for a.e. }y\in F_1.
\]
For $x>y$, the kernel formula becomes
\begin{equation}
 \Kop_kf(x)=\frac1\pi\int_I
 \frac{\widetilde R_x(y)-1}{x-y}f(y)\,dy.
 \label{eq:right-kernel-R}
\end{equation}
Thus $f_E=\chi_{E_2}$ produces a positive lower bound whenever the first
alternative holds, while $f_F=\chi_{F_1}$ produces a negative lower bound
whenever the second alternative holds.  As before, one of these two functions,
call it $f$, has a fixed-sign lower bound of magnitude $A_\delta$ on a subset
$J_0\subset J$ of measure at least $L/2$.

For the far comparison interval take
\[
 J'=I+ML.
\]
If $x\in J'$, then
\begin{align*}
 |\Kop_kf(x)|
 &\le\frac1\pi\int_I\frac{k'(y)}{k(x)-k(y)}\,dy
 +\frac1\pi\int_I\frac{dy}{x-y}\\
 &\le\frac1\pi\frac{k(b)-k(a)}{k(x)-k(b)}
 +\frac1{\pi(M-1)}.
\end{align*}
Since $x\ge b+(M-1)L$, quasisymmetry gives
\[
 \frac{k(b)-k(a)}{k(x)-k(b)}
 \le
 \frac{k(b)-k(a)}{k(b+(M-1)L)-k(b)}
 \le\eta\left(\frac1{M-1}\right).
\]
The same choice of a sufficiently large $M$ therefore makes
$|\Kop_kf|\le A_\delta/4$ on $J'$.  Taking the smallest interval containing
$J\cup J'$ and repeating the two-case comparison with an arbitrary constant
$c$ gives the same BMO lower bound as before.

In both orderings, $f$ is a characteristic function of a subset of $E'$ or
$F'$.  Hence $|f|\le\chi_{I\setminus G}$, and the proof is complete.
\end{proof}

The converse implication in Theorem~\ref{thm:main} is proved by establishing all three conditions \eqref{eq:CMO-small}--\eqref{eq:CMO-tail}.

Let $h\in\SQS(\T)$ and assume $P_h^-$ is compact.  Set $k=\gamma^{-1}\circ h\circ\gamma$, and put
\[
 q=\log k'.
\]
By Proposition~\ref{prop:circle-line-transfer}, $P_{k,\R}^-$ is compact.  Proposition~\ref{prop:Ph-H-relation} then implies that $\Kop_k$ is compact on $\BMO(\R)$.

\subsection{The global small-interval condition}

\begin{proposition}\label{prop:small-condition}
One has
\[
 \lim_{r\downarrow0}\sup_{|I|<r}M(q,I)=0.
\]
\end{proposition}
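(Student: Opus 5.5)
We argue by contradiction, combining Proposition~\ref{prop:detector} with the shrinking-support localization of Lemma~\ref{lem:shrinking} and the tail localization of Lemma~\ref{lem:tail}. Suppose the conclusion fails. Then there are $\delta>0$ and bounded intervals $I_n$ with $|I_n|\to0$ and $M(q,I_n)\ge\delta$ for all $n$. By Proposition~\ref{prop:detector} (used with $G=\varnothing$) there are functions $f_n$ with $|f_n|\le\chi_{I_n}$ and
\[
 \|\Kop_kf_n\|_{\BMO}\ge c_\delta>0 .
\]
The constant $c_\delta$ depends only on $\delta$, $\|q\|_{\BMO}$ and $\eta$, so it is uniform in $n$. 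The task is to show that this contradicts compactness of $\Kop_k$, which holds by Propositions~\ref{prop:circle-line-transfer} and~\ref{prop:Ph-H-relation}.

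We split according to the behaviour of the centers $c(I_n)$.

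\emph{Bounded centers.} If $(c(I_n))$ has a bounded subsequence, pass to a further subsequence with $c(I_n)\to x_0\in\R$. Lemma~\ref{lem:shrinking} then gives $\|\Kop_kf_n\|_{\BMO}\to0$, which contradicts the uniform lower bound $c_\delta$.

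\emph{Escaping centers.} Otherwise $|c(I_n)|\to\infty$. Since $|I_n|\to0$, for every $R>0$ we have $I_n\subset\R\setminus[-R,R]$ for all large $n$, so each $f_n$ is eventually admissible in the definition \eqref{eq:def-beta} of $\beta_k(R)$. Hence
\[
 c_\delta\le\|\Kop_kf_n\|_{\BMO}\le\beta_k(R)
\]
for all large $n$, for every fixed $R$. Letting $R\to\infty$ and using Lemma~\ref{lem:tail} yields $c_\delta\le0$, again a contradiction.

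There is one technical point. Definition \eqref{eq:def-beta} requires $\supp f\Subset\R\setminus[-R,R]$. Since $\supp f_n\subset I_n$ is compact and $I_n$ eventually lies at positive distance from $[-R,R]$, this holds; if necessary one can restrict $f_n$ to a slightly smaller closed subinterval.

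The main obstacle is ensuring that the lower bound from Proposition~\ref{prop:detector} is uniform in the interval, both in scale and in location. This is exactly what the dependence of $c_\delta$ only on $\delta$, $\|q\|_{\BMO}$ and $\eta$ provides. Once that uniformity is in hand, the two localization lemmas cover the two possible behaviours of the centers.
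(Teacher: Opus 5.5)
Your proposal is correct and matches the paper's proof. You apply Proposition~\ref{prop:detector} with $G=\varnothing$ to intervals $I_n$ with $|I_n|\to0$ and $M(q,I_n)\ge\delta$, then split on bounded centers, which Lemma~\ref{lem:shrinking} handles, versus escaping centers, which Lemma~\ref{lem:tail} handles. Your remark that $c_\delta\le\beta_k(R)$ for each fixed $R$ once $n$ is large just spells out the step the paper leaves implicit.
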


\begin{proof}
Suppose not.  Then there are $\delta>0$ and intervals $I_n$ such that
\[
 |I_n|\to0,
 \qquad
 M(q,I_n)\ge\delta.
\]
By Proposition~\ref{prop:detector}, there are $f_n$ with
\[
 |f_n|\le\chi_{I_n},
 \qquad
 \|\Kop_kf_n\|_{\BMO}\ge c_\delta.
\]
If the centers $c(I_n)$ remain bounded, pass to a subsequence for which $c(I_n)\to x_0$.  Lemma~\ref{lem:shrinking} gives
\[
 \|\Kop_kf_n\|_{\BMO}\to0,
\]
a contradiction.  If the centers are unbounded, pass to a subsequence with $|c(I_n)|\to\infty$.  Since $|I_n|\to0$, the supports of $f_n$ escape every compact subset of $\R$, and Lemma~\ref{lem:tail} gives the same contradiction.
\end{proof}

\subsection{Fixed-scale intervals escaping to infinity}

\begin{proposition}\label{prop:translation-condition}
For every fixed bounded interval $I_0$,
\[
 \lim_{|x|\to\infty}M(q,I_0+x)=0.
\]
\end{proposition}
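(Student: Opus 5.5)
We argue by contradiction, in the same way as in Proposition~\ref{prop:small-condition}. Suppose the conclusion fails for some fixed bounded interval $I_0$. Then there exist $\delta>0$ and points $x_n$ with $|x_n|\to\infty$ such that
\[
 M(q,I_0+x_n)\ge\delta
 \qquad\text{for all }n.
\]
Put $I_n=I_0+x_n$. These intervals all have the fixed length $|I_0|$.

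Proposition~\ref{prop:detector} depends only on $\delta$, $\|q\|_{\BMO}$ and $\eta$, and not on the position or size of the interval. Applied to each $I_n$ (with $G=\varnothing$), it gives real-valued functions $f_n$ with
\[
 |f_n|\le\chi_{I_n},
 \qquad
 \|\Kop_kf_n\|_{\BMO}\ge c_\delta .
\]
The constant $c_\delta$ is uniform in $n$. This uniformity, which rests on the scale and translation invariance of the quasisymmetry control~\eqref{eq:qs-control}, is what makes the argument work, and it has already been built into Proposition~\ref{prop:detector}.

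Next we check that the supports escape to infinity. Since $|I_n|=|I_0|$ is fixed and $|x_n|\to\infty$, for every $R>0$ we have $I_n\subset\R\setminus[-R,R]$ once $n$ is large. Because $\supp f_n\subset I_n$, and $I_n$ is a compact interval lying inside the open set $\R\setminus[-R,R]$, we get $\supp f_n\Subset\R\setminus[-R,R]$ for such $n$. Hence, by definition~\eqref{eq:def-beta}, there are $R_n\to\infty$ with
\[
 \|\Kop_kf_n\|_{\BMO}\le\beta_k(R_n).
\]
Since $\Kop_k$ is compact, Lemma~\ref{lem:tail} gives $\beta_k(R_n)\to0$. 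This contradicts the uniform lower bound $c_\delta$, and the proposition follows.

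We expect no step to be a real obstacle, because the work was done in Proposition~\ref{prop:detector} and Lemma~\ref{lem:tail}. The only point needing care is the containment $\supp f_n\Subset\R\setminus[-R,R]$ required by the definition of $\beta_k$. It holds because $I_n$ is compact and stays at distance tending to infinity from the origin, so we may take, for instance, $R_n=\dist(0,I_n)-1$.
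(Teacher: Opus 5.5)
Your proof is correct and matches the paper's argument: contradiction, Proposition~\ref{prop:detector} applied to $I_0+x_n$ (with $G=\varnothing$) to get $\|\Kop_kf_n\|_{\BMO}\ge c_\delta$, and then Lemma~\ref{lem:tail}, since the supports escape to infinity. Your explicit check that $\supp f_n\Subset\R\setminus[-R_n,R_n]$ with $R_n\to\infty$ spells out a step the paper states in one line.
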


\begin{proof}
Otherwise, for some $\delta>0$ there are $|x_n|\to\infty$ such that
\[
 M(q,I_0+x_n)\ge\delta.
\]
Proposition~\ref{prop:detector} gives $f_n$ supported in $I_0+x_n$ with
\[
 \|\Kop_kf_n\|_{\BMO}\ge c_\delta.
\]
The supports escape to infinity, contradicting Lemma~\ref{lem:tail}.
\end{proof}

\subsection{The large-interval condition}

\begin{proposition}\label{prop:large-condition}
One has
\[
 \lim_{R\to\infty}\sup_{|I|>R}M(q,I)=0.
\]
\end{proposition}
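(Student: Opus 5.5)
Argue by contradiction, in the same way as Propositions~\ref{prop:small-condition} and \ref{prop:translation-condition}. If the large-interval condition fails, there are $\delta>0$ and intervals $I_n$ with $|I_n|\to\infty$ and $M(q,I_n)\ge\delta$. Proposition~\ref{prop:detector} applies to every such interval, with constants independent of $n$. It is applied with a cleverly chosen exceptional set $G_n\subset I_n$ with $|G_n|\le\kappa_\delta|I_n|$, and gives $f_n$ with $|f_n|\le\chi_{I_n\setminus G_n}$ and $\|\Kop_kf_n\|_{\BMO}\ge c_\delta$. The aim is to choose $G_n$ so that the support of $f_n$ escapes every compact set. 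Then Lemma~\ref{lem:tail} yields $\|\Kop_kf_n\|_{\BMO}\to0$, which is a contradiction.

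The concrete choice is $G_n=I_n\cap[-R_n,R_n]$ with $R_n=\tfrac12\kappa_\delta|I_n|$, so that $|G_n|\le 2R_n=\kappa_\delta|I_n|$ and $R_n\to\infty$. Then $\supp f_n\subset I_n\setminus[-R_n,R_n]$. This is exactly the reason Proposition~\ref{prop:detector} was formulated with an arbitrary small removable set $G$. One technical point is that $\beta_k(R)$ in \eqref{eq:def-beta} requires $\supp f\Subset\R\setminus[-R,R]$, i.e.\ compact support away from $[-R,R]$. This is handled by replacing $R_n$ with $R_n-1$ in the definition of $\beta_k$, or equivalently by removing $[-R_n-1,R_n+1]$. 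The measure bound is then adjusted by taking $R_n=\tfrac12\kappa_\delta|I_n|-1$, which still tends to infinity. Since $f_n$ is supported in the bounded interval $I_n$, the support is compact.

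Combining these, $c_\delta\le\|\Kop_kf_n\|_{\BMO}\le\beta_k(R_n-1)\to0$ by Lemma~\ref{lem:tail}, which is the required contradiction. I expect the main obstacle to be conceptual rather than computational. One must observe that large intervals cannot be handled by translation or shrinking, since an interval $I_n$ may keep containing the origin. The removable-set clause of Proposition~\ref{prop:detector}, with $\kappa_\delta$ uniform in the scale, is precisely what lets us cut out the fixed-proportion central part and push the test function's support to infinity.

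Once Propositions~\ref{prop:small-condition}, \ref{prop:translation-condition} and the present one are established, \eqref{eq:CMO-small}--\eqref{eq:CMO-tail} show $q\in\CMO(\R)$, so $k\in\SSS_0(\R)$. By \eqref{eq:SS-circle-line}, $h\in\SSS(\T)$.
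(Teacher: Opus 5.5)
Your proof is correct and follows the paper's argument: argue by contradiction, delete the central piece $G_n=I_n\cap[-R_n,R_n]$ using the removable-set clause of Proposition~\ref{prop:detector}, and conclude with Lemma~\ref{lem:tail}. The differences are cosmetic: you take $R_n$ proportional to $\kappa_\delta|I_n|$ where the paper takes $R_n=\sqrt{L_n}=o(L_n)$, and your explicit handling of the $\Subset$ requirement in \eqref{eq:def-beta} tidies a point the paper passes over.
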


\begin{proof}
Suppose the conclusion fails.  Then there are $\delta>0$ and intervals $I_n$ such that
\[
 L_n=|I_n|\to\infty,
 \qquad
 M(q,I_n)\ge\delta.
\]
Choose numbers $R_n\to\infty$ with
\[
 \frac{R_n}{L_n}\to0;
\]
for example, $R_n=\sqrt{L_n}$.  Set
\[
 G_n=I_n\cap[-R_n,R_n].
\]
Then
\[
 |G_n|\le2R_n=o(L_n).
\]
For all sufficiently large $n$,
\[
 |G_n|\le\kappa_\delta|I_n|,
\]
where $\kappa_\delta$ is the constant in Proposition~\ref{prop:detector}.  The deletion-stable conclusion of that proposition gives $f_n$ such that
\[
 |f_n|\le\chi_{I_n\setminus G_n}
\]
and
\[
 \|\Kop_kf_n\|_{\BMO}\ge c_\delta.
\]
But
\[
 \supp f_n\subset\R\setminus[-R_n,R_n].
\]
Lemma~\ref{lem:tail} therefore implies
\[
 \|\Kop_kf_n\|_{\BMO}\to0,
\]
a contradiction.
\end{proof}

Propositions~\ref{prop:small-condition}, \ref{prop:translation-condition}, and \ref{prop:large-condition}, together with the CMO characterization recalled in Section 2, give
\begin{equation}
 \log k'=q\in\CMO(\R). \label{eq:q-CMO}
\end{equation}
Thus $k$ belongs to the strongly vanishing symmetric class $\SSS_0(\R)$ of \cite{LiuShen}.  By \eqref{eq:SS-circle-line}, the homeomorphism $h$ belongs to $\SSS(\T)$.   This proves the converse implication in Theorem~\ref{thm:main}.

\begin{corollary}[Real-line form]\label{cor:real-line}
Let $k\in\SQS(\R)$.  Then
\[
 P_{k,\R}^-:\BMOA(\Hh)\longrightarrow\BMOA(\Hh)
\]
is compact if and only if
\[
 \log k'\in\CMO(\R),
\]
equivalently, if and only if $k\in\SSS_0(\R)$.
\end{corollary}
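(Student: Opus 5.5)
The corollary has two directions, and I would derive both from the circle results by conjugating with the Cayley map $\gamma$, running the argument of the section in reverse.

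Fix $k\in\SQS(\R)$ and set $h=\gamma\circ k\circ\gamma^{-1}$. Since $k$ is an increasing homeomorphism of $\R$, $h$ is an orientation-preserving homeomorphism of $\T$ fixing $1$. By the conformal invariance of the BMO--Teichm\"uller space \cite[Theorem~1.2]{LiuShen}, applied in the reverse direction, $h\in\SQS(\T)$. The identity $h\circ\gamma=\gamma\circ k$ holds by construction, so \eqref{eq:Ph-circle-line} and Proposition~\ref{prop:circle-line-transfer} apply verbatim. Hence $P_{k,\R}^-$ is compact on $\BMOA(\Hh)$ if and only if $P_h^-$ is compact on $\BMOA(\D)$.

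For the direction ``compact $\Rightarrow$ CMO'', I would not go through the circle at all. Proposition~\ref{prop:Ph-H-relation}, with $X=\R$ and $g=k$, shows that compactness of $P_{k,\R}^-$ is equivalent to compactness of $\Kop_k$ on $\BMO(\R)$. Propositions~\ref{prop:small-condition}, \ref{prop:translation-condition} and \ref{prop:large-condition} then use only this compactness and $k\in\SQS(\R)$. They give \eqref{eq:CMO-small}--\eqref{eq:CMO-tail} for $q=\log k'$, and the Uchiyama characterization yields $q\in\CMO(\R)$.

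For the direction ``CMO $\Rightarrow$ compact'', assume $\log k'\in\CMO(\R)$, that is, $k\in\SSS_0(\R)$. By \eqref{eq:SS-circle-line} applied to $h$, we get $h\in\SSS(\T)$. The forward implication of Theorem~\ref{thm:main}, which is the Fan--Hu--Shen result, then makes $P_h^-$ compact. Finally, Proposition~\ref{prop:circle-line-transfer} transfers compactness to $P_{k,\R}^-$.

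The statement that $k\in\SSS_0(\R)$ is equivalent to $\log k'\in\CMO(\R)$ is the definition of $\SSS_0(\R)$, since $k\in\SQS(\R)$ is assumed.

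The only point needing care is that \eqref{eq:SS-circle-line} and the SQS invariance were stated starting from $h$ on the circle. I must check that they are genuinely two-sided equivalences under $k\mapsto\gamma\circ k\circ\gamma^{-1}$. This is how \cite{LiuShen} states them, and the correspondence $h\leftrightarrow k$ is bijective between circle homeomorphisms fixing $1$ and line homeomorphisms fixing $\infty$. I therefore expect this bookkeeping to be the main, though minor, obstacle.
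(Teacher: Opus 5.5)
Your proposal is correct and follows essentially the same route as the paper, which gives no separate proof of the corollary. There, the implication from compactness to $\log k'\in\CMO(\R)$ is exactly the real-line argument of Section~\ref{sec:kernel}, which uses only $k\in\SQS(\R)$ and the compactness of $\Kop_k$ supplied by Proposition~\ref{prop:Ph-H-relation}. The reverse implication is the Fan--Hu--Shen result carried over by \eqref{eq:SS-circle-line} and Proposition~\ref{prop:circle-line-transfer}, and the two-sided Cayley correspondence you flag is indeed the only bookkeeping needed; it is covered by the conformal invariance theorem of Liu and Shen that the paper also invokes.
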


\end{document}